\documentclass[letterpaper, 10 pt, conference]{ieeeconf}  

\IEEEoverridecommandlockouts                              

\makeatletter
\let\NAT@parse\undefined
\makeatother
\usepackage[bookmarks=false,pdftex,pagebackref=false]{hyperref}

\hypersetup{									
	pdfdisplaydoctitle=true,					
	bookmarksnumbered=true,						
	colorlinks,
	citecolor=NavyBlue,
	bookmarksopen=true,
	pdfauthor={Pedro Santos, Joel Reis, Paulo Oliveira, and Carlos Silvestre},
	pdftitle={Geometric Reduced-Attitude Tracking under a Time-Varying Conic Constraint via Smooth Reference-Shaping},
	pdfsubject={},
	pdfkeywords={Attitude tracking, constrained control, geometric control}
}
\usepackage{graphics} 
\usepackage{amsmath,amsfonts,mathtools,amsthm} 
\usepackage{amssymb}  
\usepackage{color}
\usepackage[dvipsnames]{xcolor}
\let\b\mathbf
\let\bs\boldsymbol
\newcommand{\overbar}[1]{\mkern 1.5mu\overline{\mkern-1.5mu#1\mkern-1.5mu}\mkern 1.5mu}
\usepackage{tikz}
\usetikzlibrary{arrows.meta, positioning, shapes, calc}
\usepackage{cite}
\usepackage{balance}

\newtheorem{theorem}{Theorem}
\newtheorem{definition}{Definition}
\newtheorem{problem}{Problem}
\newtheorem{proposition}{Proposition}
\newtheorem{remark}{Remark}
\newtheorem{assumption}{Assumption}

\title{\LARGE \bf
	Geometric Reduced-Attitude Tracking under a Time-Varying Conic Constraint via Smooth Reference-Shaping*
}

\author{Pedro Santos$^{1}$, Joel Reis$^{2}$, Paulo Oliveira$^{1}$, and Carlos Silvestre$^{2,3}$
	\thanks{*This article has been accepted for publication in IEEE Control Systems Letters. This is the authors' version which has not been fully edited and content may change prior to final publication. Citation information: DOI 10.1109/LCSYS.2026.3707149. \copyright 2026 IEEE.  Personal use of this material is permitted.  Permission from IEEE must be obtained for all other uses, in any current or future media, including reprinting/republishing this material for advertising or promotional purposes, creating new collective works, for resale or redistribution to servers or lists, or reuse of any copyrighted component of this work in other works. }
\thanks{This work was supported %
	by Fundação para a Ciência e Tecnologia (FCT), through the Ph.D. scholarship of P. Santos (2023.00268.BD), LAETA (UID/50022/2025), and LARSyS (LA/P/0083/2020), %
	by the Macao Science and Technology Development Fund under Grants 0192/2023/RIA3, 0059/2024/RIA1, 0036/2025/AIJ and 0021/2025/ASJ, %
	and by the University of Macau, under Projects MYRG2022-00205-FST, MYRG-GRG2023-00107-FST-UMDF, UMDF-TISF/2025/007/FST, and SRG2024-00012-FST. %
}
\thanks{$^{1}$P. Santos and P. Oliveira are with the Institute of Mechanical Engineering and the Institute for Systems and Robotics, Instituto Superior Técnico, Universidade de Lisboa, Lisbon, Portugal
        {\tt\small \{pedrodossantos31, paulo.j.oliveira\}@tecnico.ulisboa.pt}}%
\thanks{$^{2}$J. Reis and C. Silvestre are with the Department of Electrical and Computer Engineering of the Faculty of Science and Technology, University of Macau, Taipa, Macao, China
        {\tt\small \{joelreis, csilvestre\}@um.edu.mo}}%
\thanks{$^{3}$C. Silvestre is on leave from the Institute for Systems and Robotics,  Instituto Superior Técnico, Universidade de Lisboa, Lisbon, Portugal.
       }%
}

\begin{document}

\maketitle
\thispagestyle{empty}
\pagestyle{empty}

\begin{abstract}

This letter studies reduced-attitude tracking for a rigid body on the 2-sphere $\mathbb{S}^2$ under a time-varying conic constraint. Using a kinematic model on $\mathbb{S}^2$, we first propose a geometric tracking law that guarantees almost-global asymptotic and regionally exponential convergence in the unconstrained case, where the angular velocity serves as the control input. We then introduce a smooth reference-shaping mechanism that adjusts the desired direction so that the reference provided to the controller satisfies the time-varying conic constraint while preserving the smoothness required by the tracking law. The resulting approach yields smooth continuous feedback and retains the stability guarantees of the unconstrained controller, albeit at the expense of enforcing a soft version of the original constraint. Simulation results illustrate the effectiveness of the method and highlight its suitability for applications where deterministic behavior, smooth control action, and strong stability guarantees are preferred over hard constraint satisfaction.

\end{abstract}


\section{INTRODUCTION}




Reduced-attitude control of rigid bodies arises in a wide range of applications, motivated by the need to regulate a body-fixed axis and by the presence of torque underactuation. Representative examples include spacecraft spin-axis stabilization \cite{Tsiotras1994}, thrust-direction control for trajectory tracking of underactuated vehicles \cite{Casau19,Leomanni2024}, and visual servoing \cite{Serra2016}. While the full attitude of a rigid body is naturally represented by a rotation matrix in $\mathrm{SO}(3)$, the reduced attitude is more appropriately represented by a unit vector on $\mathbb{S}^2$. By exploiting the geometric properties of this Riemannian manifold, one can design singularity-free control laws and obtain almost-global stabilization and tracking results \cite{Bullo95}.

Motivated by real engineering problems, recent years have seen a growing interest in the constrained attitude control problem \cite{Fiori24}, in which specific regions of the configuration space, either SO(3) or $\mathbb{S}^2$, must be avoided \cite{Lee17}, \cite{Liu23}. The literature on constrained attitude control is vast and can be broadly classified according to control task, constraint type, and solution strategy for handling constraints. The control task may involve reorientation, stabilization or tracking. Constraints may be single \cite{Ibuki2020} or multiple \cite{Dimos21}, and can be static or dynamic \cite{Kim2004}, of the inclusion or exclusion type \cite{Hu2019}, and may have different geometric structures \cite{Sawant25}. Finally, the most predominant solution strategies include reference-shaping approaches \cite{Nicotra20,Nakano2018,Tan20}; barrier-based methods, such as barrier Lyapunov functions \cite{Romdlony16}, potential functions \cite{Fiori24}, \cite{Mesbahi14}, and Control Barrier Functions (CBFs) \cite{Ibuki2020}, \cite{Wu15}; and optimization-based methods, including convex optimization \cite{Szmuk20} and Model Predictive Control (MPC) \cite{Guiggiani2015}.

Relatively few works address attitude trajectory tracking under time-varying constraints. In particular, solutions for conic constraints in which both the cone axis and the half-angle vary with time remain limited. In \cite{Hu2019}, two dynamic conic constraints under reduced-attitude tracking are handled via artificial potential and Lyapunov barrier functions, although the half-angle limits are kept constant. Other works have focused on the reorientation/planning problem with time-varying constraints. For instance, in \cite{He2022} the authors rely on a search-based method to determine the attitude maneuver between two points, while in \cite{Li2025} an artificial potential field-based method is proposed for the reorientation problem. Moreover, both single and multiple invariant conic constraints have been studied in \cite{Ibuki2020}, \cite{Ramos18}, \cite{Shen18}. 

In this work, we consider a single dynamic conic constraint on $\mathbb{S}^2$, where both the cone axis and half-angle are known functions of time. This scenario is relevant to applications including angle-of-attack limiting in aerial vehicles \cite{Auntenrieb25}, where the relative velocity vector is time-varying and the maximum allowable angle depends on flight conditions, as well as spacecraft solar panel and sensor pointing \cite{Mesbahi14}, where pointing constraints vary with orbital dynamics. By leveraging a reduced-attitude kinematic model on $\mathbb{S}^2$ and treating two orthogonal components of the angular velocity vector as control inputs, we first devise a geometric reduced-attitude tracking law for the unconstrained case. Related approaches to the unconstrained reduced-attitude tracking problem can be found in \cite{Lee2016,Pong2015,Coates2021}. We then introduce a smooth reference‑shaping mechanism that generates an admissible and sufficiently smooth reference by leveraging the notion of geodesic distance and employing a smooth ramp function. By feeding the controller with the admissible shaped-reference, the closed-loop system is able to satisfy a soft version of the conic constraint and the tracking error is minimized up to a sub-optimal region around the constraint boundary introduced by a smoothing step.

\subsection{Contributions}

Aiming at strict stability guarantees through smooth feedback, the main contributions can be summarized as follows:
\begin{itemize}
	\item  We propose a smooth reference-shaping mechanism for a time-varying conic constraint, ensuring admissibility of the shaped-reference and preserving the differentiability properties required by the controller;
	\item We prove that the tracking controller, fed with the shaped-reference, ensures satisfaction of a soft version of the constraint, providing a closed-form solution;
	\item We quantify the deviation from the optimal projected reference introduced by the smoothing step.
\end{itemize}

\subsection{Notation}

Bold lowercase and uppercase symbols stand for column vectors and matrices, respectively. The $n$-dimensional Euclidean space is represented by ${\mathbb{R}}^n$. Given a vector $\b{x} \in \mathbb{R}^n$, its Euclidean norm is defined as $||\b{x}|| \coloneq \sqrt{\b{x}^\mathsf{T}\bf{x}}$. The 2-sphere is defined as $\mathbb{S}^2\coloneq\left\{\b{x}\in\mathbb{R}^3:\b{x}^\mathsf{T}\b{x}=1\right\}$. The space tangent to $\mathbb{S}^2$ at a point $\b{p}\in\mathbb{S}^2$ is denoted by $T_\b{p}\mathbb{S}^2$. The special orthogonal group of order three is defined as $\mathrm{SO}(3)$$\coloneq\left\{\b{X}\in \mathbb{R}^{3\times3}: \b{X}\b{X}^\mathsf{T}=\b{X}^\mathsf{T} \b{X} = \b{I},\,\text{det}(\b{X})=1\right\}$. The boundary of a set $\mathcal{A}$ is denoted by $\partial \mathcal{A}$. The operator $\b{S}(\cdot):\mathbb{R}^3\mapsto \mathbb{R}^{3\times3}$ yields a skew-symmetric matrix satisfying (the cross product) $\b{S}(\b{x})\b{y} =\b{x} \times \b{y} $, for any $\b{x},\,\b{y}\in \mathbb{R}^3$. The symbol $\mathbf{I}$ denotes the identity matrix of appropriate dimensions. A function $f$ is of class $\mathcal{C}^n$ if its derivatives $f^{(1)}$, $f^{(2)}$, ...,$f^{(n)}$ exist and are continuous.   The vectors $\b{e}_1,\,\b{e}_2,\,\b{e}_3 \in \mathbb{S}^2$ are orthonormal and form a basis of $\mathbb{R}^3$, such that ${\b{I}}=\left[{\b{e}}_1\;\b{e}_2\;\b{e}_3\right]$. 

\section{Problem Formulation}
We consider a rigid body and address the problem of generating an angular velocity command to track a time-varying inertial pointing reference in the presence of a time-varying conic constraint. Given a body-fixed frame $\{B\}$, typically aligned with the principal directions of the body and attached to its center of mass, and an inertial frame $\{I\}$, the configuration manifold of the attitude kinematics is the special orthogonal group $\mathrm{SO}(3)$. By defining $\b{R}\in\mathrm{SO}(3)$ as the rotation matrix that transforms vectors from $\{B\}$ to $\{I\}$, the attitude kinematics are given by $\dot{\b{R}} = \b{R}\,\b{S}(\bs{\omega})$, where $\bs\omega \in \mathbb{R}^3$ is the angular velocity of $\{B\}$ with respect to $\{I\}$, expressed in $\{B\}$.

\subsection{Reduced-Attitude Kinematics on the $2$-Sphere }
When controlling a single body-fixed direction, the problem is more naturally posed as one of reduced‑attitude tracking, where rotation about this axis is left unattended. To that end, we make use of the projection map \cite{Bullo95}
\begin{equation}\label{eq:projector}
	\bs{\pi}_i(\b{R}) \coloneq \b{R}\b{e}_i\,,\hspace{10pt}i=\left\{1,2,3\right\}
\end{equation}
which takes the full attitude of the vehicle and projects it into a unit vector on $\mathbb{S}^2$. Using this map, the axes of $\{B\}$ expressed in $\{I\}$ are denoted by $\b{q}_i \in \mathbb{S}^2$, and given by
\begin{equation}\label{eq:projector2}
	\b{q}_i \coloneq  \bs{\pi}_i(\b{R})\,,\hspace{10pt}i = \{1,\,2,\,3\}\,.
\end{equation}
From \eqref{eq:projector} and \eqref{eq:projector2}, the reduced-attitude kinematics for each axis $\b{q}_i$ can be obtained and expressed as follows:
\begin{equation}\label{eq:kinematics}
	\dot{\b{q}}_i = -\b{S}(\b{q}_i)\b{u}_i\,, \hspace{10pt} 	\dot{\b{q}}_i\in T_{\b{q}_i}\mathbb{S}^2\,,
\end{equation}
where $\b{u}_i\in T_{\b{q}_i}\mathbb{S}^2$ is defined as $\b{u}_i \coloneq -\b{S}^2(\b{q}_i)\b{R}\,\bs\omega\,$, corresponding to the projection of $\b{R}\,\bs{\omega}$ onto $T_{\b{q}_i}\mathbb{S}^2$. By noting that $\b{R}\,\bs\omega=\b{e}_1^\mathsf{T}\bs\omega\b{q}_1 + \b{e}_2^\mathsf{T}\bs\omega\b{q}_2 + \b{e}_3^\mathsf{T}\bs\omega\b{q}_3$, it follows, for distinct $i,j,k \in \{1,2,3\}$, that
\begin{equation}\label{eq:input}
	\b{u}_i = \b{e}_j^\mathsf{T}\bs\omega\b{q}_j + \b{e}_k^\mathsf{T}\bs\omega\b{q}_k\,.
\end{equation}
In other words, the dynamics of each axis $\b{q}_i$ are only impacted by the components of $\bs\omega$ orthogonal to it.
\begin{assumption}\label{ass:u}
	The angular velocity $\bs\omega$ is regulated by a sufficiently fast inner-loop controller. Consequently, $\b{u}_i$ in \eqref{eq:input} can be treated as a control input.
\end{assumption}
From this point onward, without loss of generality, we drop the index notation to consider a generic body axis $\b{q}\in\mathbb{S}^2$ and, following Assumption~\ref{ass:u}, an associated control input $\b{u}\in T_{\b{q}}\mathbb{S}^2$. Then, the unconstrained reduced-attitude tracking problem can be stated as follows:
\begin{problem}[\textbf{Unconstrained reference tracking}]\label{prob:unconstrained}
	Given a time-parameterized desired direction denoted by $\b{q}_d\in\mathbb{S}^2$, design a kinematic steering law for the input $\b{u}$ such that, under the model in \eqref{eq:kinematics}, $\b{q}(t)\rightarrow\b{q}_d(t)$ as $t\rightarrow\infty$.
\end{problem}
\begin{assumption}\label{ass:class}
	The reference $\b{q}_d(t)$ is of class at least $\mathcal{C}^1$.
\end{assumption}

\subsection{Time-Varying Pointing Constraint}
\begin{figure}[t]
	\centering
	\includegraphics[width=0.79\columnwidth]{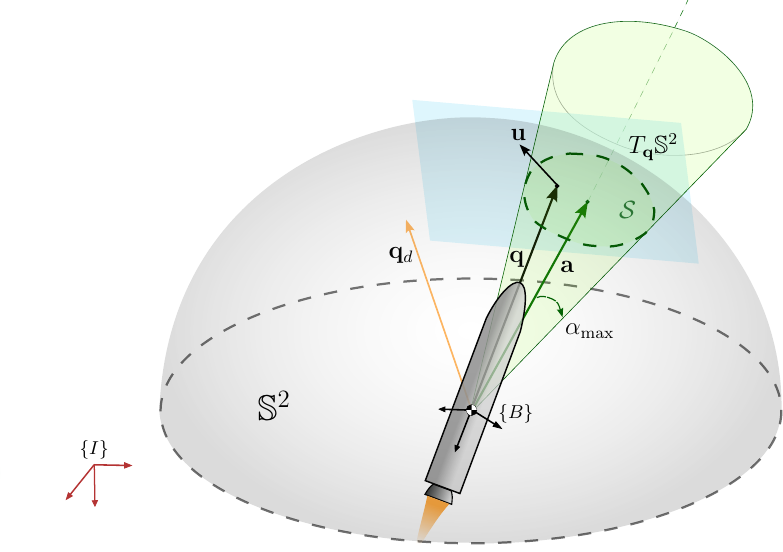} 
	\caption{Problem formulation on the 2-sphere for an inclusion constraint.}
	\label{fig:sphere}
\end{figure}

Besides the reference tracking objective, we wish to constrain the geodesic distance between $\b{q}$ and an externally defined, time-parameterized direction $\b{a}\in\mathbb{S}^2$.

\begin{definition}[\textbf{Geodesic distance} \cite{Bullo95}]
	The geodesic distance between two points $\b{x},\,\b{y}\in\mathbb{S}^2$ is defined as the angle between them and is given by $d_{\mathbb{S}^2}(\b{x},\b{y}) \coloneq \arccos{(\b{x}^\mathsf{T}\b{y})}$, with $0 \leq d_{\mathbb{S}^2}({\bf x}, {\bf y}) \leq \pi$, which corresponds to the length of the shorter great-circle arc connecting the points.
\end{definition}
The time-varying constraint can be formulated as 
\begin{equation}\label{eq:conic_constraint}
	d_{\mathbb{S}^2}(\b{q}(t),\b{a}(t)) \leq \alpha_\mathrm{max}(t)\,, 
\end{equation}
where $\alpha_\text{max} \in \left(0,\,\pi\right)$ is an externally defined, possibly time-varying, maximum value for the geodesic distance between $\b{q}$ and $\b{a}$. Accordingly, a safe set $\mathcal{S}$ is defined as $\mathcal{S}(t)\coloneq\left\{\b{q}\in\mathbb{S}^2:d_{\mathbb{S}^2}(\b{q}(t),\b{a}(t))\leq\alpha_\mathrm{max}(t)\right\}$. Note that the formulation in \eqref{eq:conic_constraint} can be used to represent both conic inclusion and exclusion constraints. Fig.~\ref{fig:sphere} schematizes the constrained problem through an example scenario in which reduced-attitude tracking of a launch vehicle must satisfy an angle-of-attack conic inclusion constraint. 

In this letter, we consider a relaxed version of the conically constrained attitude control problem, where the hard constraint \eqref{eq:conic_constraint} is replaced by the soft constraint
\begin{equation}\label{eq:soft_conic_constraint}
	d_{\mathbb{S}^2}(\b{q}(t),\b{a}(t)) \leq \alpha_\mathrm{max}(t) + \gamma(t)\,,
\end{equation}
where $\gamma:[0,\infty) \mapsto  \left(0,\pi\right]$ is an admissible vanishing violation, satisfying $\gamma(t)\to0$ as $t\to\infty$ and $\dot{\gamma}(t)<0$ for $t\geq0$. The soft-constrained problem can then be stated as follows: 
\begin{problem}[\textbf{Soft-constrained reference tracking}]\label{prob:constrained}
	Given a time-parameterized desired direction $\b{q}_d\in\mathbb{S}^2$, design a control law for $\b{u}$ such that, under the model in \eqref{eq:kinematics}, $d_{\mathbb{S}^2}(\b{q}(t),\b{q}_d(t))$ is minimized while satisfying constraint \eqref{eq:soft_conic_constraint}.  
\end{problem}
\begin{assumption}\label{ass:smooth_constraint}
	The vector $\b{a}$ and the angle $\alpha_\mathrm{max}$ are given by time-parameterized functions that are at least of the same differentiability class as the reference $\b{q}_d$ and $\b{q}_d(t) \neq -\b{a}(t)$ for all $t\geq0$. 
\end{assumption}
Due to the topological properties of $\mathbb{S}^2$, Assumption~\ref{ass:smooth_constraint} will prove to be useful when solving Problem~\ref{prob:constrained}.

\subsection{Solution Strategy}

Our approach aims to enforce stability guarantees through continuous smooth feedback while avoiding online optimization. To that end, we start by solving the unconstrained Problem~\ref{prob:unconstrained} and design a geometric controller that ensures almost global convergence of $\b{q}$ to $\b{q}_d$. Then, we address Problem~\ref{prob:constrained} by proposing a smooth reference-shaping method such that the condition
\begin{equation}\label{eq:relaxed_constraint}
	d_{\mathbb{S}^2}(\overbar{\b{q}}_d(t),\b{a}(t)) \leq \alpha_\mathrm{max}(t)\,, 
\end{equation}
where $\overbar{\b{q}}_d\in\mathbb{S}^2$ is the shaped-reference supplied to the controller, is always satisfied. The resulting scheme is shown in Fig. \ref{fig:diagram}. As opposed to optimization- and/or CBF-based methods, our strategy foregoes a strict certificate of forward-invariance of the safe set $\mathcal{S}$ in the name of improved real-time stability guarantees and smoother control input.

\section{Unconstrained Tracking Controller}
Consider the Lyapunov function $V:\mathbb{S}^2\times\mathbb{S}^2\mapsto\left[0,2\right]$ 
\begin{equation}\label{eq:V}
	V(\b{q},\b{q}_d) \coloneq 1-\b{q}^\mathsf{T}_d\b{q}\,.
\end{equation}
Using \eqref{eq:kinematics}, its time derivate can be written as
\begin{equation}\label{eq:Vdot1}
	\dot{V} =-\b{q}^\mathsf{T}\left(\b{S}(\b{q}_d)\b{u}+\dot{\b{q}}_d \right) = \left(\b{S}(\b{q}_d)\b{q}\right)^\mathsf{T}\left( \b{u}-\b{S}(\b{q}_d)\dot{\b{q}}_d\right)\,,
\end{equation}
where the fact that $\dot{\b{q}}_d\in  T_{\b{q}_d}\mathbb{S}^2 $ was used in the second step. By defining a tracking error $\b{e}\in T_\b{q}\mathbb{S}^2\cap T_{\b{q}_d}\mathbb{S}^2 $ as
\begin{equation}\label{eq:e}
	\b{e}\coloneq \b{S}(\b{q}_d)\b{q}\,,
\end{equation}
and defining a desired angular velocity $\bs{\Omega}_d\in T_{\b{q}_d}\mathbb{S}^2$ as $\bs{\Omega}_d\coloneq\b{S}(\b{q}_d)\dot{\b{q}}_d$, the time derivative in \eqref{eq:Vdot1} simplifies to $\dot{V} = \b{e}^\mathsf{T}\left(\b{u} - \bs{\Omega}_d\right)$. Therefore, by setting 
\begin{equation}\label{eq:unconstrainedlaw}
	\b{u}\coloneq - k\b{e} -\b{S}^2(\b{q})\,\bs\Omega_d\,,
\end{equation}
where $k>0$ is a user-defined gain, the time derivative of \eqref{eq:V} becomes
\begin{equation}\label{eq:Vdot3}
	\dot{V} = - k\,||\b{e}||^2\leq 0\,,
\end{equation}
where the fact that $\b{e}^\mathsf{T}\left(\b{S}^2(\b{q}) + \b{I}\right)\bs\Omega_d = 0$ was employed.
\begin{proposition}\label{prop:tracking}
	Under Assumptions~\ref{ass:u} and \ref{ass:class}, the control law \eqref{eq:unconstrainedlaw} renders the origin of the tracking error $\b{e}$ in \eqref{eq:e} almost globally asymptotically stable with a region of exponential stability given by $\mathcal{Q} \coloneq \{\b{q} \in \mathbb{S}^2: d_{\mathbb{S}^2}(\b{q},\b{q}_d) \leq\pi/2\}$.
\end{proposition}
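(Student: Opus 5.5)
We work in terms of $V=1-\b{q}_d^\mathsf{T}\b{q}$ from \eqref{eq:V}. The plan is to express $\|\b{e}\|^2$ as a function of $V$ alone, which turns \eqref{eq:Vdot3} into a scalar differential equation. Once that is done, both the almost-global convergence and the exponential rate on $\mathcal{Q}$ follow from elementary scalar arguments, uniformly in time.

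Since $\b{q},\b{q}_d\in\mathbb{S}^2$, the Lagrange identity gives
\[
\|\b{e}\|^2=\|\b{S}(\b{q}_d)\b{q}\|^2=1-(\b{q}_d^\mathsf{T}\b{q})^2=(1-\b{q}_d^\mathsf{T}\b{q})(1+\b{q}_d^\mathsf{T}\b{q}).
\]
This can be rewritten as $\|\b{e}\|^2=V(2-V)$. Substituting into \eqref{eq:Vdot3} gives the autonomous scalar equation
\[
\dot V=-kV(2-V),\qquad V\in[0,2],
\]
whose explicit solution is logistic. Its equilibria are $V=0$, which corresponds to $\b{q}=\b{q}_d$, and $V=2$, which corresponds to $\b{q}=-\b{q}_d$. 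The solution of this equation must be unique, since this is what certifies that $V(t)$ obeys the logistic law. To check this, we note that the closed loop \eqref{eq:kinematics}--\eqref{eq:unconstrainedlaw} is a time-varying vector field on $\mathbb{S}^2$. It is continuous in $t$ and locally Lipschitz in $\b{q}$ because $\b{q}_d$ and $\dot{\b{q}}_d$ are continuous (Assumption~\ref{ass:class}). Compactness of $\mathbb{S}^2$ then gives existence on $[0,\infty)$.

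For the almost-global statement, consider any initial condition with $V(0)<2$. The logistic solution then satisfies $V(t)\to0$ monotonically. Using $\|\b{e}\|^2=V(2-V)\le 2V$, it follows that $\b{e}\to0$, and in fact $\b{q}\to\b{q}_d$. Stability of the origin follows from $\|\b{e}\|^2\le 2V$ together with $V$ being nonincreasing. The only exceptional initial set is $\{\b{q}(0)=-\b{q}_d(0)\}$. This is a single point of $\mathbb{S}^2$, so it has measure zero. It is invariant, because $V\equiv2$ there, and hence $\b{e}\equiv0$ on it. This invariance is exactly the topological obstruction. Note that this antipodal set also gives $\b{e}=0$. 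Stability of $\b{e}$ is therefore best phrased through $V$, i.e.\ via convergence of $\b{q}$ to $\b{q}_d$. I would state the result in terms of the set $\{V=0\}$ and remark that $\b{e}\to0$ follows.

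For the exponential region, $\b{q}\in\mathcal{Q}$ means $\b{q}_d^\mathsf{T}\b{q}\ge0$, i.e.\ $V\le1$. Since $V$ is nonincreasing, the set $\{V\le 1\}$ is forward invariant. On it we have $2-V\ge1$, so $\dot V\le -kV$ and hence $V(t)\le V(0)e^{-kt}$. Then $\|\b{e}(t)\|^2\le 2V(0)e^{-kt}$. On $\mathcal{Q}$ we also have $V\le \|\b{e}\|^2$, because $\|\b{e}\|^2=V(2-V)\ge V$ when $V\le1$. Combining the two bounds gives
\[
\|\b{e}(t)\|\le\sqrt{2}\,\|\b{e}(0)\|e^{-kt/2},
\]
which is the exponential rate with uniform constants.

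The main subtlety is not the computation but the interpretation of ``origin of $\b{e}$''. The set $\b{e}=0$ contains the antipodal configuration, so I would carefully phrase the conclusion on $\mathbb{S}^2$ via $V$ to handle this. The remaining estimates are direct from the logistic equation.
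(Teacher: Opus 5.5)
Your proof is correct and follows essentially the paper's route: the identity $\|\mathbf{e}\|^2=V(2-V)$ turns \eqref{eq:Vdot3} into $\dot V=-kV(2-V)$, which gives convergence of $\mathbf{q}$ to $\mathbf{q}_d$ whenever $\mathbf{q}(0)\neq-\mathbf{q}_d(0)$, and gives $\dot V\le -kV$ on $\mathcal{Q}$, where $V\le 1$. Your refinements are welcome: you solve the logistic equation explicitly, where the paper only appeals to $\dot V$ vanishing at $\mathbf{q}=\pm\mathbf{q}_d$; you note that $\mathbf{e}=\mathbf{0}$ also at the antipode; and your bound $V(t)\le V(0)e^{-kt}$ is the correct form of \eqref{eq:exponential_convergence}, which for an entry time $t_i>0$ should read $e^{-k(t-t_i)}$.
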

\begin{proof}
	The limits of the codomain of $V$ occur at the points where $\b{e}$ is zero, i.e., $V(\b{q}=\b{q}_{d}) = 0$ and $V(\b{q}=-\b{q}_{d})=2$. Moreover, its time derivative in (\ref{eq:Vdot3}) is negative definite with respect to $\b{e}$, vanishing when $\b{q}=\pm\b{q}_{d}$. Therefore, for any initial condition in the set $\mathcal{I} \coloneq \{\b{q} \in \mathbb{S}^2: \b{q}\neq-\b{q}_{d}\}$, $\b{q}$ will converge asymptotically to $\b{q}_{d}$. Moreover, in the set $\mathcal{I}$, the relation $||\b{e}||^2 = V\left(2-V\right)$ is valid, where we used \eqref{eq:V} and \eqref{eq:e}, allowing us to rewrite \eqref{eq:Vdot3} as $\dot{V} = -2kV + kV^2$. In the set $\mathcal{Q} \coloneq \{\b{q} \in \mathbb{S}^2: d_{\mathbb{S}^2}(\b{q},\b{q}_d) \leq\pi/2\}$, we have that $0 \leq V \leq 1$, allowing us to upper bound the time derivative of $V$ according to $\dot{V} \leq -2kV + kV = -kV$. Thus, when $\b{q}$ enters the set  $\mathcal{Q}$ at $t=t_i$, for $t_i\geq0,$ we have that
	\begin{equation}\label{eq:exponential_convergence}
		V(t) \leq V(t_i)\, e^{-kt}\,,\hspace{10pt} \forall t \geq t_i\,,
	\end{equation}
	implying that $\b{q}$ converges exponentially to $\b{q}_d$.
\end{proof}
\begin{remark}
	By replacing $\b{e}$ in the control law \eqref{eq:unconstrainedlaw} with $\b{e}/(1+\b{q}_d^\mathsf{T}\b{q})$, the origin of $\b{e}$ becomes almost globally exponentially stable, at the expense of the control law becoming undefined at $\b{q}=-\b{q}_d$.
\end{remark}
\begin{figure}[t]
	\centering
	\resizebox{\columnwidth}{!}{%
		\begin{tikzpicture}[>=Stealth, node distance=1cm and 1.5cm]

\node[
	draw=black,
	line width=2pt,
	rounded corners,
	minimum height=2.5cm,
	text width=3.0cm,
	align=center,
	font=\Large,
] 
(ReferenceShaping) at (-5,-1) 
{
	Reference Shaping \eqref{eq:smooth_ref}
};

\draw[black,->,ultra thick] 
	([xshift=-1.5cm]ReferenceShaping.west) -- (ReferenceShaping.west)
	node[midway, above, font=\Large] {$\mathbf{q}_d(t)$};
	
\node[
	draw=black,
	line width=2pt,
	rounded corners, 
	minimum height=2.5cm, 
	text width=2.5cm,
	align=center,
	font=\Large,
] 
(GeometricController) at (-0.6,-1) 
{
	Geometric Controller \eqref{eq:unconstrainedlaw}
};
	
\draw[black,->,ultra thick] 
	(ReferenceShaping.east) 
	to node[midway, above, font=\Large] {$\overbar{\mathbf{q}}_d(t)$}
	(GeometricController.west);
	
\node[
	draw=black,
	line width=2pt,
	rounded corners, 
	minimum height=2cm,
	text width=2.5cm,
	align=center,
	font=\Large,
] 
(RAK) at (3.5,-1) 
{
	Reduced-Attitude Kinematics \eqref{eq:kinematics}
};

\draw[black,->,ultra thick] 
	(GeometricController.east) 
	to node[midway, above, font=\Large] {$\mathbf{u}(t)$}
	(RAK.west);

\node[] (Output) at (7,-3.5) {};

\draw[black,->,ultra thick]
	(RAK.east) -- node[above,font=\Large,xshift=0cm] {$\mathbf{q}(t)$}
	([xshift=2cm]RAK.east);
	
\draw[black,-,ultra thick]
	([xshift=1cm]RAK.east)
	-|
	([xshift=1cm,yshift=-2.0cm]RAK.east);

\draw[black,->,ultra thick] ([xshift=-1cm,yshift=-0.8cm]ReferenceShaping.south) -- node[below,font=\Large,yshift=-0.3cm] {$\mathbf{a}(t)$} ([xshift=-1cm]ReferenceShaping.south);

\draw[black,->,ultra thick] ([xshift=1cm,yshift=-0.8cm]ReferenceShaping.south) -- node[below,font=\Large,yshift=-0.4cm] {$\alpha_{\max(t)}$} ([xshift=1cm]ReferenceShaping.south);

\draw[black,->,ultra thick] ([xshift=1cm,yshift=-2.0cm]RAK.east) -| (GeometricController.south);

\end{tikzpicture}
	}
	\caption{Diagram of the proposed solution.}
	\label{fig:diagram}
\end{figure}

\section{Smooth Reference-Shaping}
With a kinematic steering law \eqref{eq:unconstrainedlaw} that solves the unconstrained Problem~\ref{prob:unconstrained} already available, we proceed by proposing a smooth reference-shaping mechanism such that the reference supplied  to the controller, $\overbar{\b{q}}_d$, does not violate the constraint in \eqref{eq:relaxed_constraint}. To that end, we start by introducing the operator $\bs\Lambda(\chi,\b{x}):[0,\pi]\times\mathbb{S}^2 \mapsto \mathrm{SO}(3)$ which performs a rotation about the vector $\b{x}\in\mathbb{S}^2$ by an angle $\chi$. According to Rodrigues' formula, we have
\begin{equation}\label{eq:Rodrigues}
	\bs\Lambda(\chi,\b{x})  \coloneq \b{I} + \sin\chi\,\b{S}(\b{x}) + (1-\cos\chi)\,\b{S}^2(\b{x}) \,.
\end{equation}
\subsection{Optimal Reference-Shaping}
The optimal shaped-reference, denoted by $\overbar{\b{q}}_d^\star \in\mathbb{S}^2$, is defined as the element that minimizes $d_{\mathbb{S}^2}(\b{q}_d,\overbar{\b{q}}_d)$ while satisfying constraint \eqref{eq:relaxed_constraint}, and is given by
\begin{subequations}\label{eq:opt_shape}
	\begin{equation}
		\overbar{\b{q}}_d^\star(t) \coloneq \begin{cases}
			\b{q}_d(t)\,,\hspace{20pt}d_{\mathbb{S}^2}(\b{q}_d,\b{a})\leq\alpha_\mathrm{max}\\
			\b{q}_{d}'(t)\,,\hspace{20pt}d_{\mathbb{S}^2}(\b{q}_d,\b{a})>\alpha_\mathrm{max}
		\end{cases}\,,
	\end{equation}
	where $\b{q}_{d}'\in\mathbb{S}^2$ is the vector on $\partial\mathcal{S}$ with minimal geodesic distance to $\b{q}_d$. Using \eqref{eq:Rodrigues}, it follows that 
	\begin{equation}\label{eq:Lambda}
		\b{q}_{d}'(t)\coloneq\bs\Lambda(\theta(t),\b{v}(t))\,\b{q}_d(t)\,,
	\end{equation}
	\begin{equation}\label{eq:theta_def}
		\theta(t) \coloneq d_{\mathbb{S}^2}(\b{q}_d,\b{q}_d') = d_{\mathbb{S}^2}(\b{q}_d,\b{a}) - \alpha_\mathrm{max}\,,
	\end{equation}
	\begin{equation}
		\b{v}(t) \coloneq \frac{\b{S}(\b{q}_d)\b{a}}{||\b{S}(\b{q}_d)\b{a}||}\,.
	\end{equation}
\end{subequations}
Note that, under Assumption~\ref{ass:smooth_constraint}, $\overbar{\b{q}}_d^\star$ is defined everywhere. The optimal shaped-reference $\overbar{\b{q}}_d^\star$ in \eqref{eq:opt_shape} is a saturated version of $\b{q}_d$, limited by the conic constraint, which minimizes $d_{\mathbb{S}^2}(\b{q}_d,\overbar{\b{q}}_d)$ whenever $\b{q}_d$ is outside the safe region $\mathcal{S}$. 
\subsection{Smooth Reference-Shaping}
The hard saturation behavior of $\overbar{\b{q}}_d^\star$ in \eqref{eq:opt_shape} violates the smoothness requirements of the tracking controller. To tackle this, a smooth version of the optimal shaped-reference is derived by addressing the non-smooth transition at the constraint boundary, when $d_{\mathbb{S}^2}(\b{q}_d,\b{a})=\alpha_\text{max}$. Let $\psi(\eta):[0,1]\mapsto[0,1]$ be a class $\mathcal{C}^n$ function satisfying the following properties: $\psi(0)=0$, $\psi(1)=1$, $\psi^{(i)}(0)=\psi^{(i)}(1)=0$ for $1\leq i\leq n$, $\psi(\eta)+\psi(1-\eta)=1$, and $\psi^{(1)}\geq0$. Using $\psi$, a smooth approximation function to the ramp function $H(x)x$, where $H(x)$ is the Heaviside step, can be constructed as
\begin{equation}\label{eq:smooth_transition}
	w (x) \coloneq \begin{cases}
		0\,,\hspace{83pt}x\leq-\delta\\
		\int_{-\delta}^{x}\psi(\frac{\eta+\delta}{2\delta})\,d\eta\,,\hspace{20pt}-\delta<x<\delta\\
		x\,,\hspace{83pt}x\geq\delta		
	\end{cases}\,,
\end{equation}
where $w(x):\mathbb{R}\mapsto[0,\infty)$ is of class $\mathcal{C}^{n+1}$ and $\delta\in\left(0,1\right)$. Then, the shaped-reference is proposed as
\begin{subequations}\label{eq:smooth_ref}
	\begin{equation}\label{eq:smooth_ref_2}
		\overbar{\b{q}}_d(t) \coloneq \begin{cases}
			\b{q}_d(t)\,,\hspace{46.5pt}0\leq d_{\mathbb{S}^2}(\b{q}_d,\b{a}) < \alpha_\mathrm{max}(1-\delta)\\[10pt]
			\bs\Lambda\left(\overbar{\theta}(t),\b{v}(t)\right)\b{q}_d(t)\,,\hspace{6pt}d_{\mathbb{S}^2}(\b{q}_d,\b{a})\geq\alpha_\mathrm{max}(1-\delta)        
		\end{cases}
	\end{equation}
	\begin{equation}\label{eq:theta_bar_def}
		\overbar{\theta}(t) \coloneq w\left(\frac{\theta}{\alpha_\mathrm{max}}\right)\,\alpha_\mathrm{max}\,.
	\end{equation}
\end{subequations}
Looking at \eqref{eq:smooth_ref}, and comparing it with \eqref{eq:opt_shape}, it is clear that $\overbar{\b{q}}_d$ is equivalent to its optimal counterpart, $\overbar{\b{q}}_d^\star$, whenever $\b{q}_d$ lies outside an arbitrarily small region around the constraint given by $d_{\mathbb{S}^2}(\b{q}_d,\b{q}_d') \leq \delta\alpha_\mathrm{max}$. This sub-optimal region is introduced with the goal of smoothing out the hard saturation behavior of the optimal solution at the boundary transition. The construction of $\overbar{\theta}$ in \eqref{eq:theta_bar_def} is such that it smoothly matches the original rotation angle $\theta$ outside the transition region. Note that for $\theta/\alpha_\mathrm{max}\geq\delta$ we have that $\overbar{\theta}=\theta$.
\begin{remark}\label{remark:class_w}
	There exist infinite functions $\psi(\eta)$ for \eqref{eq:smooth_transition} which satisfy the stated requirements for an arbitrary $n$. In particular, the family of smoothstep polynomials \cite{Ebert2002}, obtained via Hermite interpolation of the boundary conditions, and given by
	\begin{equation}\label{eq:psi}
		\psi(\eta) = \eta^{n+1} \displaystyle\sum_{j=0}^{n} \frac{(2n+1)!}{j!\,n!(n-j)!(n+j+1)}(-\eta)^j\,,
	\end{equation}
	yields suitable candidates.
\end{remark}                                                                          
\begin{proposition}\label{prop:smooth} Suppose that $\b{q}_d$ is of class $\mathcal{C}^k$. Under Assumption~\ref{ass:smooth_constraint}, for $n\geq k-1$, the reference shaping law in \eqref{eq:smooth_ref} is well-defined and $\overbar{\b{q}}_d$ is also of class $\mathcal{C}^k$.
\end{proposition}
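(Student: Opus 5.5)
Our plan is to show that the two branches of \eqref{eq:smooth_ref_2} coincide on a neighborhood of the switching surface, so that $\overbar{\b{q}}_d$ is locally given by a single composition of $\mathcal{C}^k$ maps. We handle well-definedness first and regularity second.

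\textbf{Well-definedness.} The only potentially undefined object is $\b{v}$, whose denominator $||\b{S}(\b{q}_d)\b{a}||$ vanishes when $\b{q}_d=\pm\b{a}$. The case $\b{q}_d=-\b{a}$ is excluded by Assumption~\ref{ass:smooth_constraint}. The case $\b{q}_d=\b{a}$ gives $d_{\mathbb{S}^2}(\b{q}_d,\b{a})=0<\alpha_\mathrm{max}(1-\delta)$, because $\alpha_\mathrm{max}>0$ and $\delta<1$. In that case the first branch is active and $\b{v}$ is never evaluated. Hence on the set where the second branch is used, $d_{\mathbb{S}^2}(\b{q}_d,\b{a})\in[\alpha_\mathrm{max}(1-\delta),\pi)$ and $\b{v}$ is a well-defined unit vector. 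Consequently $\overbar{\b{q}}_d\in\mathbb{S}^2$, since $\bs\Lambda\in\mathrm{SO}(3)$.

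\textbf{Regularity.} We split the time axis into three open sets: $A=\{t: d_{\mathbb{S}^2}(\b{q}_d,\b{a})<\alpha_\mathrm{max}(1-\delta)\}$, $B=\{t: d_{\mathbb{S}^2}(\b{q}_d,\b{a})>\alpha_\mathrm{max}(1-\delta)/2\}$, and the overlap $A\cap B$. These cover $[0,\infty)$.
\begin{itemize}
\item On $A$, $\overbar{\b{q}}_d=\b{q}_d$ is $\mathcal{C}^k$ by hypothesis.
\item On $B$, we have $\b{q}_d^\mathsf{T}\b{a}\in(-1,1)$. Therefore $d_{\mathbb{S}^2}(\b{q}_d,\b{a})=\arccos(\b{q}_d^\mathsf{T}\b{a})$ is $\mathcal{C}^k$, because $\arccos$ is smooth on $(-1,1)$. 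Likewise $\b{v}$ is $\mathcal{C}^k$, because $||\b{S}(\b{q}_d)\b{a}||=\sin d_{\mathbb{S}^2}(\b{q}_d,\b{a})>0$ there. By Assumption~\ref{ass:smooth_constraint}, $\alpha_\mathrm{max}$ is $\mathcal{C}^k$ and bounded away from zero. So $\theta/\alpha_\mathrm{max}$ is $\mathcal{C}^k$, and since $w\in\mathcal{C}^{n+1}\subseteq\mathcal{C}^k$ for $n\geq k-1$, the chain rule makes $\overbar\theta$ a $\mathcal{C}^k$ function. Rodrigues' formula \eqref{eq:Rodrigues} is smooth in $(\chi,\b{x})$, so $\bs\Lambda(\overbar\theta,\b{v})\b{q}_d$ is $\mathcal{C}^k$ on $B$.
\item On $A\cap B$, we have $\theta/\alpha_\mathrm{max}<-\delta$. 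By \eqref{eq:smooth_transition}, $w=0$ there, so $\overbar\theta=0$, $\bs\Lambda=\b{I}$, and the second-branch expression equals $\b{q}_d$.
\end{itemize}
Thus the two branches agree on the overlap, and $\overbar{\b{q}}_d$ is $\mathcal{C}^k$ on every set of an open cover. Hence it is $\mathcal{C}^k$ everywhere.

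\textbf{Main obstacle.} The key verification is that the switching threshold sits strictly inside the region where $w\equiv0$. The threshold $\alpha_\mathrm{max}(1-\delta)$ corresponds exactly to $\theta/\alpha_\mathrm{max}=-\delta$, so the open overlap must extend slightly below it. That is why $B$ uses the smaller bound $\alpha_\mathrm{max}(1-\delta)/2$: any positive margin below the threshold keeps $\b{v}$ defined while staying in the flat zone of $w$.

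A subsidiary check is that $w$ is genuinely $\mathcal{C}^{n+1}$ at $x=\pm\delta$. Its derivative is $\psi((x+\delta)/(2\delta))$, which glues continuously to $0$ and $1$ via $\psi(0)=0$ and $\psi(1)=1$. The higher derivatives vanish at both ends by $\psi^{(i)}(0)=\psi^{(i)}(1)=0$, matching the derivatives of $0$ and of $x$.
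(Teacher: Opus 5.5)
Your argument is correct, but it glues the two branches of \eqref{eq:smooth_ref_2} differently from the paper.

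The paper works pointwise at the switching surface. Each branch is $\mathcal{C}^k$ on its own domain. At $d_{\mathbb{S}^2}(\b{q}_d,\b{a})=\alpha_\mathrm{max}(1-\delta)$ one has $\theta/\alpha_\mathrm{max}=-\delta$, hence $\overbar{\theta}=0$ and $\bs\Lambda=\b{I}$. The vanishing of the derivatives of $w$ at $x=-\delta$ is then used to conclude that the time derivatives of $\bs\Lambda(\overbar{\theta},\b{v})\b{q}_d$ up to order $k$ coincide there with those of $\b{q}_d$.

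You instead use that $w\equiv 0$ on $(-\infty,-\delta]$. The rotation formula therefore equals $\b{q}_d$ on the whole open overlap $A\cap B$, and regularity follows by locality on the open cover $\{A,B\}$ with no derivative matching at all.

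Your route is more robust in three ways:
\begin{itemize}
\item It does not need the switching times to be isolated crossings; the set where $d_{\mathbb{S}^2}(\b{q}_d,\b{a})=\alpha_\mathrm{max}(1-\delta)$ may contain intervals.
\item It makes explicit that $\arccos(\b{q}_d^\mathsf{T}\b{a})$ and $\b{v}$ are smooth on a neighborhood of the region where the second branch is active, which the paper leaves implicit.
\item It uses exactly the hypothesis $n\geq k-1$, through $w\in\mathcal{C}^{n+1}\subseteq\mathcal{C}^k$. The paper's write-up states the vanishing for $i\leq k\leq n-1$, although $w^{(i)}(-\delta)=0$ holds for all $i\leq n+1$.
\end{itemize}
The flatness of $\psi$ at $0$ is still indispensable in your route. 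It is hidden in the global smoothness of $w$, which is needed because $\theta/\alpha_\mathrm{max}$ may cross $-\delta$ inside $B$. The paper's version is shorter and shows more directly why those conditions are imposed.

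Two minor points:
\begin{itemize}
\item $\alpha_\mathrm{max}$ is only assumed to lie in $(0,\pi)$, not to be bounded away from zero. Pointwise positivity is all you need.
\item Your check of $w$ at $x=\delta$ matches derivatives but not the value. Continuity there requires $w(\delta)=2\delta\int_0^1\psi(s)\,ds=\delta$, i.e.\ $\int_0^1\psi=1/2$, which follows from $\psi(\eta)+\psi(1-\eta)=1$.
\end{itemize}
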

\begin{proof}
	Given that only smoothness preserving operations and mappings are used, each branch in \eqref{eq:smooth_ref_2} is of class $\mathcal{C}^k$ in its respective domain. Note that $w(x)$ is of class $\mathcal{C}^{n+1}$. Hence, we have left to prove the continuity of $\overbar{\b{q}}_d$ and of its $k$-th order derivatives at the transition point defined by $d_{\mathbb{S}^2}(\b{q}_d,\b{a}) = \alpha_\text{max}(1-\delta)$. From \eqref{eq:theta_def}, that point corresponds to $\theta/\alpha_\mathrm{max}=-\delta$, leading to $\overbar{\theta}=0$, which in turn implies ${\bs\Lambda}\left(\overbar{\theta},\b{v}\right) = \b{I}$, thereby proving continuity. At the point $x=-\delta$, from the definition of $\psi$, it follows that $w^{(i)}(x) =0$ for $i\leq k\leq n-1$. With this, and based on \eqref{eq:Rodrigues}, one can infer that $\frac{d^{i}}{dt^{i}}\left({\bs\Lambda\left(\overbar{\theta}(t),\b{v}(t)\right)}\b{q}_d(t)\right)|_{\overbar{\theta}=0}  = 	\frac{d^{i}}{dt^{i}}\b{q}_d(t)$ for any $i\leq k$, thus concluding our proof.
\end{proof}

We can now state the main theorem regarding the solution of Problem~\ref{prob:constrained} through the interconnection of the unconstrained tracking law with the smooth reference-shaping.
\begin{theorem}\label{th}
	Consider the sets $\mathcal{I} \coloneq \{\b{q} \in \mathbb{S}^2: \b{q}\neq-\overbar{\b{q}}_{d}\}$ and $\mathcal{Q}\coloneq \{\b{q} \in \mathbb{S}^2: d_{\mathbb{S}^2}(\b{q},\overbar{\b{q}}_d) \leq\pi/2\}$. Under Assumptions~\ref{ass:u}, \ref{ass:class}, and \ref{ass:smooth_constraint}, for $\b{q}(0)\in\mathcal{I}$, the control law in \eqref{eq:unconstrainedlaw} together with the smooth reference-shaping mechanism in \eqref{eq:smooth_ref} ensures that constraint \eqref{eq:soft_conic_constraint} is satisfied with $\gamma(t) = 	d_{\mathbb{S}^2}(\b{q},\overbar{\b{q}}_d)$ and  $d_{\mathbb{S}^2}(\b{q},\b{q}_d)$ is minimized in a $\delta\gamma$-optimal sense, i.e.,
	\begin{equation*}
		|d^\star_{\mathbb{S}^2}(\b{q},\b{q}_d)-d_{\mathbb{S}^2}(\b{q},\b{q}_d)|\leq 2\delta\,\alpha_\mathrm{max}\,\int_{0}^{1/2}\psi(\eta)\,d\eta + \gamma\,,
	\end{equation*}
	\begin{equation}
		d^\star_{\mathbb{S}^2}(\b{q},\b{q}_d)\coloneq \begin{cases}
			0\,,\hspace{47pt}d_{\mathbb{S}^2}(\b{q}_d,\b{a})\leq\alpha_\mathrm{max}\\
			d_{\mathbb{S}^2}(\overbar{\b{q}}^\star_d,\b{q}_d)	\,,\hspace{5pt}d_{\mathbb{S}^2}(\b{q}_d,\b{a})>\alpha_\mathrm{max}
		\end{cases}.
	\end{equation}
	Moreover, for $\b{q}(0)\in\mathcal{Q}$, the time instant $t_s$ after which $\gamma(t)$ is below a given geodesic distance $\gamma_s>0$ is bounded by
	\begin{equation}\label{eq:ts}
		t_s \leq \frac{1}{k}\ln\left(\frac{1-\b{q}^\mathsf{T}(0)\overbar{\b{q}}_d(0)}{1-\cos\gamma_s}\right)\,.
	\end{equation}
\end{theorem}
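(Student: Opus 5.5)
The plan has three steps: admissibility of the shaped reference plus the triangle inequality, the $\delta\gamma$-optimality bound, and the settling-time estimate from Proposition~\ref{prop:tracking}. Throughout we use that $d_{\mathbb{S}^2}$ is a metric on $\mathbb{S}^2$, so it satisfies the triangle inequality.

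\textbf{Step 1: soft constraint.} We first show that $\overbar{\b{q}}_d$ always satisfies \eqref{eq:relaxed_constraint}.
\begin{itemize}
\item In the first branch of \eqref{eq:smooth_ref_2}, $d_{\mathbb{S}^2}(\b{q}_d,\b{a})<\alpha_\mathrm{max}(1-\delta)<\alpha_\mathrm{max}$, so there is nothing to prove.
\item In the second branch, $\bs\Lambda(\overbar\theta,\b{v})$ rotates $\b{q}_d$ by the angle $\overbar\theta$ about the axis $\b{v}\propto\b{q}_d\times\b{a}$. This moves $\b{q}_d$ along the great circle through $\b{q}_d$ and $\b{a}$, toward $\b{a}$, and Assumption~\ref{ass:smooth_constraint} keeps $\b{v}$ well defined. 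Hence $d_{\mathbb{S}^2}(\overbar{\b{q}}_d,\b{a})=|d_{\mathbb{S}^2}(\b{q}_d,\b{a})-\overbar\theta|$.
\item The ramp $w$ satisfies $w(x)\geq x$ and $w(x)\geq0$, because $\psi\le1$ and $w(x)-x$ is nonincreasing. Writing $\theta=d_{\mathbb{S}^2}(\b{q}_d,\b{a})-\alpha_\mathrm{max}$, this gives $\overbar\theta\ge\theta$, i.e. $d_{\mathbb{S}^2}(\b{q}_d,\b{a})-\overbar\theta\le\alpha_\mathrm{max}$.
\item We also need $\overbar\theta\le d_{\mathbb{S}^2}(\b{q}_d,\b{a})$, so that the rotation does not overshoot $\b{a}$. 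For $\theta\geq\delta\alpha_\mathrm{max}$ this holds because $\overbar\theta=\theta$. In the transition band, $\overbar\theta\le w(\delta)\alpha_\mathrm{max}\le\delta\alpha_\mathrm{max}\le\alpha_\mathrm{max}(1-\delta)\le d_{\mathbb{S}^2}(\b{q}_d,\b{a})$. The middle inequality requires $\delta\le1/2$; if only $\delta<1$ is allowed, I would instead bound $w(x)\le x+\delta$ and check the inequality directly. I expect this to be a minor technical point.
\end{itemize}
The triangle inequality then gives
\[
d_{\mathbb{S}^2}(\b{q},\b{a})\le d_{\mathbb{S}^2}(\b{q},\overbar{\b{q}}_d)+d_{\mathbb{S}^2}(\overbar{\b{q}}_d,\b{a})\le \gamma+\alpha_\mathrm{max},
\]
with $\gamma=d_{\mathbb{S}^2}(\b{q},\overbar{\b{q}}_d)$. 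By Proposition~\ref{prop:smooth}, $\overbar{\b{q}}_d$ is $\mathcal{C}^1$, so Proposition~\ref{prop:tracking} applies with $\b{q}_d$ replaced by $\overbar{\b{q}}_d$. For $\b{q}(0)\in\mathcal{I}$, $V$ is nonincreasing and converges to zero, so $\gamma=\arccos(1-V)$ decreases to zero.

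\textbf{Step 2: $\delta\gamma$-optimality.} Two triangle inequalities give $|d_{\mathbb{S}^2}(\b{q},\b{q}_d)-d_{\mathbb{S}^2}(\overbar{\b{q}}_d,\b{q}_d)|\le\gamma$. It therefore suffices to show
\[
|d^\star_{\mathbb{S}^2}-d_{\mathbb{S}^2}(\overbar{\b{q}}_d,\b{q}_d)|\le 2\delta\alpha_\mathrm{max}\int_0^{1/2}\psi.
\]
In every case $d_{\mathbb{S}^2}(\overbar{\b{q}}_d,\b{q}_d)=\overbar\theta$, and $d^\star_{\mathbb{S}^2}=\max(\theta,0)$, so the deviation equals $\alpha_\mathrm{max}|w(x)-\max(x,0)|$ with $x=\theta/\alpha_\mathrm{max}$.
\begin{itemize}
\item Outside $(-\delta,\delta)$ this deviation is zero.
\item For $x\in(-\delta,0]$, $w$ is nondecreasing, so $w(x)\le w(0)$.
\item For $x\in[0,\delta)$, $x-w(x)$ is nondecreasing, since its derivative is $1-\psi\ge0$. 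Hence $|x-w(x)|\le\delta-w(\delta)$.
\item Substituting $\eta=(s+\delta)/(2\delta)$ gives $w(0)=2\delta\int_0^{1/2}\psi$. The symmetry $\psi(\eta)+\psi(1-\eta)=1$ gives $w(\delta)=2\delta\int_0^1\psi=\delta$, hence $\delta-w(\delta)=0$; this should be rechecked. In the same way, $x-w(x)$ on $[0,\delta)$ is bounded by $w(0)$.
\end{itemize}
The maximal deviation is therefore $2\delta\alpha_\mathrm{max}\int_0^{1/2}\psi$.

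\textbf{Step 3: settling time.} For $\b{q}(0)\in\mathcal{Q}$, \eqref{eq:exponential_convergence} with $t_i=0$ gives $1-\cos\gamma(t)=V(t)\le V(0)e^{-kt}$. The condition $\gamma\le\gamma_s$ is equivalent to $V\le1-\cos\gamma_s$, so it holds as soon as $V(0)e^{-kt}\le1-\cos\gamma_s$. Solving for $t$ yields \eqref{eq:ts}.

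The main obstacle is the careful bookkeeping of $w$ in Step 2, together with the no-overshoot condition $\overbar\theta\le d_{\mathbb{S}^2}(\b{q}_d,\b{a})$ in Step 1. The remaining steps are direct applications of earlier results.
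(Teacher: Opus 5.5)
Your proposal follows the same route as the paper's proof. It combines admissibility of $\overbar{\b{q}}_d$ with the triangle inequality, transfers Proposition~\ref{prop:tracking} to $\overbar{\b{q}}_d$ via Proposition~\ref{prop:smooth}, bounds the smoothing deviation by its value at $\theta=0$, and uses the exponential estimate \eqref{eq:exponential_convergence} for $t_s$. Your quantity $\alpha_\mathrm{max}|w(x)-\max(x,0)|$ is the same as the paper's $d_{\mathbb{S}^2}(\overbar{\b{q}}_d,\overbar{\b{q}}_d^\star)$, since both shaped references lie on the arc from $\b{q}_d$ toward $\b{a}$. You also verify admissibility, which the paper only asserts holds by construction.

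Your fallback for $\delta>1/2$ works. Since $w(x)-x$ is nonincreasing and equals $\delta$ at $x=-\delta$, you get $\overbar{\theta}\le\theta+\delta\alpha_\mathrm{max}=d_{\mathbb{S}^2}(\b{q}_d,\b{a})-(1-\delta)\alpha_\mathrm{max}<d_{\mathbb{S}^2}(\b{q}_d,\b{a})$ for every $\delta\in(0,1)$. A small addition: in the second branch $\b{v}$ is well defined not only because $\b{q}_d\neq-\b{a}$. It also needs $d_{\mathbb{S}^2}(\b{q}_d,\b{a})\geq(1-\delta)\alpha_\mathrm{max}>0$, which rules out $\b{q}_d=\b{a}$.

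The step that is wrong as written is the third bullet of Step~2, the one you flagged yourself. From ``$x-w(x)$ is nondecreasing on $[0,\delta)$'' you bound $|x-w(x)|$ by the right-endpoint value $\delta-w(\delta)=0$. That would force $w(x)=x$ on $[0,\delta)$, which contradicts $w(0)=2\delta\int_0^{1/2}\psi>0$.

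The correct inference uses the other endpoint. Since $w(x)\geq x$ (Step~1), $x-w(x)\le0$, so $|x-w(x)|=w(x)-x$ is nonincreasing and $|x-w(x)|\le w(0)$ on $[0,\delta)$. Together with your bound $w(x)\le w(0)$ on $(-\delta,0]$, the deviation increases up to $x=0$ and decreases after it. This is exactly the paper's claim that the maximum occurs at $\theta=0$, and it gives the bound $2\delta\alpha_\mathrm{max}\int_0^{1/2}\psi$.

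Your closing sentence states the right bound, but the argument before it does not support it. With this one-line correction the proof is complete.
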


\begin{proof}
	From the triangle inequality on $\mathbb{S}^2$, we have that  $d_{\mathbb{S}^2}(\b{q},\b{a})\leq d_{\mathbb{S}^2}(\b{q},\overbar{\b{q}}_d) +  d_{\mathbb{S}^2}(\overbar{\b{q}}_d,\b{a})$. By construction, $d_{\mathbb{S}^2}(\overbar{\b{q}}_d,\b{a})\leq\alpha_\mathrm{max}$, which implies that $d_{\mathbb{S}^2}(\b{q},\b{a})\leq \alpha_\mathrm{max} + d_{\mathbb{S}^2}(\b{q},\overbar{\b{q}}_d)$. From Proposition~\ref{prop:smooth}, $\overbar{\b{q}}_d$ preserves the smoothness of $\b{q}_d$, therefore it can be used in the control law in \eqref{eq:unconstrainedlaw}, with all formal guarantees of Proposition~\ref{prop:tracking} extending to $\overbar{\b{q}}_d$. In particular, $\b{q}\to\overbar{\b{q}}_d$ and $\frac{d}{dt}d_{\mathbb{S}^2}(\b{q},\overbar{\b{q}}_d)<0$, which proves the satisfaction of the soft constraint \eqref{eq:soft_conic_constraint} with $\gamma(t) = 	d_{\mathbb{S}^2}(\b{q},\overbar{\b{q}}_d)$. Additionally, the quantity $d_{\mathbb{S}^2}(\overbar{\b{q}}_d,\overbar{\b{q}}_d^\star)$ is non-zero on the interval $\theta/\alpha_\mathrm{max} \in \left[-\delta,\,\delta\right]$, with its maximum occurring at $\theta=0$ given the conditions imposed on $\psi$. Therefore, $d_{\mathbb{S}^2}(\overbar{\b{q}}_d,\overbar{\b{q}}_d^\star)$ is bounded according to
	\begin{equation}
		d_{\mathbb{S}^2}(\overbar{\b{q}}_d,\overbar{\b{q}}_d^\star) \leq 2\delta\,\alpha_\mathrm{max}\,\int_{0}^{1/2}\psi(\eta)\,d\eta\,.
	\end{equation}
	Hence, $|d^\star_{\mathbb{S}^2}(\b{q},\b{q}_d)-d_{\mathbb{S}^2}(\b{q},\b{q}_d)|\leq 2\delta\,\alpha_\mathrm{max}\,\int_{0}^{1/2}\psi(\eta)\,d\eta + \gamma$. Finally, for $\b{q}(0)\in\mathcal{Q}$, we have that $\b{q}$ converges exponentially to $\overbar{\b{q}}_d$. From the exponential convergence condition in \eqref{eq:exponential_convergence}, together with the Lyapunov function definition in \eqref{eq:V}, the result in \eqref{eq:ts} immediately follows.
\end{proof}
With Theorem~\ref{th}, we prove that, by relaxing the safety requirements, the closed-loop system satisfies the soft constraint \eqref{eq:soft_conic_constraint} while retaining almost global stability via smooth feedback. Moreover, it can recover from an initial condition outside the safe region. The additional tracking error introduced by the smooth transition depends on its relative size, which can be adjusted through the tuning parameter $\delta$. While reducing $\delta$ effectively approximates $\overbar{\b{q}}_d$ to $\overbar{\b{q}}_d^\star$, it also reduces its smoothness, leading to more aggressive changes in $\b{u}$ and potentially violating the limits of Assumption~\ref{ass:u}.

\section{Simulation}

In this section, computer simulation results are displayed for both static and dynamic conic inclusion constraints. The same reference trajectory is used for both scenarios and is given by $\b{q}_d(t)\coloneq\left[\,\sin(\beta)\cos (t)\:\:\sin(\beta)\sin (t)\:\:\cos(\beta)\,\right]^\mathsf{T}$, where $\beta(t)\coloneq \sin(2t)$. The initial state is $\b{q}(0)\coloneq \bs\Lambda(40^\circ,\sqrt2/2[\,-1\:\: 1\:\: 0\,]^\mathsf{T}) \,\b{e}_3$ and the gain is set to $k \coloneq 10$. Given that only $C^1$ continuity is required by the controller, $\psi(\eta)$ is selected as in \eqref{eq:psi} with $n=1$.

Initially, the proposed solution is tested for a static conic inclusion constraint defined by $\b{a} \coloneq \b{e}_3$ and $\alpha_\mathrm{max}(t) \coloneq \pi/6$. 
Fig.~\ref{fig:sphere_const} displays a visualization on the 2-sphere of the tracking results for $\delta = 0.2$. We see that $\b{q}$ converges to $\overbar{\b{q}}_d$, recovering from an initial direction outside of the safe region. After the initial transient, the actual direction never leaves the spherical cap defined by the conic inclusion constraint, correctly tracking the reference whenever inside the safe region, and minimizing the geodesic distance to it otherwise. 
\begin{figure}[t]
	\centering
	\includegraphics[width=0.7\columnwidth]{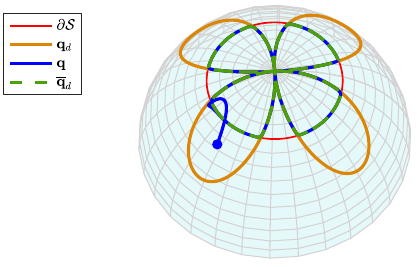} 
	\caption{Tracking under static conic constraint for {\mathversion{normal}$\delta=0.2$}.}
	\label{fig:sphere_const}
\end{figure}
These observations are supported by the tracking-error evolution shown in Fig. \ref{fig:err_static}, where the error is measured as the geodesic distance between $\b{q}$ and $\b{q}_d$. For this scenario, the effect of $\delta$ on the tracking error during boundary transitions is minimal.

\begin{figure}[t]
	\centering
	\includegraphics{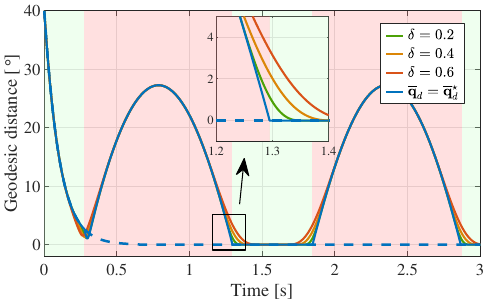} 
	\caption{Tracking error {\mathversion{normal}$d_{\mathbb{S}^2}(\b{q},\b{q}_d)$} for different values of {\mathversion{normal}$\delta$} and limit case {\mathversion{normal}$\overbar{\b{q}}_d=\overbar{\b{q}}_d^\star$}. Regions in green indicate {\mathversion{normal}$d_{\mathbb{S}^2}(\b{q}_d,\b{a})\leq\alpha_\mathrm{max}$} and regions in red indicate {\mathversion{normal}$d_{\mathbb{S}^2}(\b{q}_d,\b{a})>\alpha_\mathrm{max}$}. Dashed line represents {\mathversion{normal}$d_{\mathbb{S}^2}(\b{q},\overbar{\b{q}}_d)$}.}
	\label{fig:err_static}
\end{figure}
\begin{figure}[t!]
	\centering
	\includegraphics{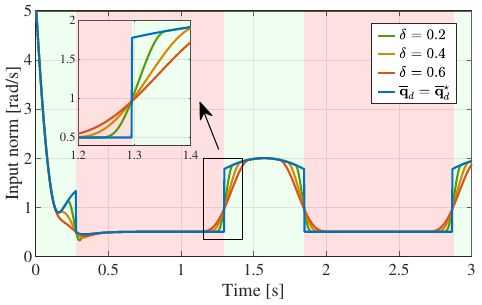} 
	\caption{Input norm, {\mathversion{normal}$\|\b{u}(t)\|$}, for different values of {\mathversion{normal}$\delta$} and for {\mathversion{normal}$\overbar{\b{q}}_d=\overbar{\b{q}}_d^\star$}. }
	\label{fig:input_norm}
\end{figure}
\begin{figure}[t]
	\centering
	\includegraphics[width=0.7\columnwidth]{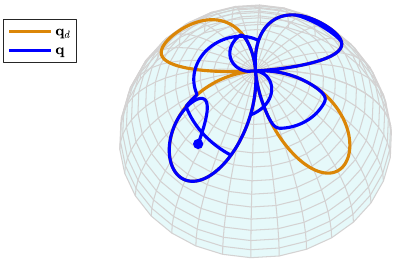} 
	\caption{Tracking under dynamic conic constraint for {\mathversion{normal}$\delta=0.2$}.}
	\label{fig:sphere_var}
\end{figure}
Finally, Fig. \ref{fig:input_norm} shows the time evolution of $\|\b{u}(t)\|$ for different values of $\delta$, together with the non-smooth limit case in which the optimal shaped-reference $\overbar{\b{q}}_d^\star$ is used directly. The results show that the proposed mechanism produces a smooth approximation of the optimal reference, with $\delta$ acting as a tuning parameter that controls the smoothness level.

Under the same reference trajectory, we next impose a dynamic conic inclusion constraint whose axis follows a constant-latitude circle according to $\b{a}(t)\coloneq 0.5\left[\,\cos(t) \:\:\sin(t) \:\:\sqrt{3}\,\right]^\mathsf{T}$, and whose half-angle follows the sinusoidal profile $\alpha_\mathrm{max}(t)\coloneq \pi/6(1.1+\cos(0.5t))$. 

Fig.~\ref{fig:sphere_var} illustrates the tracking results for $\delta = 0.2$. The effect of the dynamic constraint is reflected in the behavior of the actual direction $\b{q}$, which transitions from near-perfect tracking to a more constrained motion dictated by the time-varying cone axis and half-angle. Given the dynamic nature of the problem, the reader is referred to the video available at \href{https://youtu.be/7hQAtjc6k0s}{https://youtu.be/7hQAtjc6k0s} for a better visualization of the results. To further assess the effectiveness of the method, Fig. \ref{fig:alpha} shows the time evolution of $d_{\mathbb{S}^2}(\b{q},\b{a})$ together with $\alpha_\mathrm{\max}(t)$. By overlaying the tracking error $d_{\mathbb{S}^2}(\b{q},\b{q}_d)$, one verifies that the intervals during which the constraint becomes active coincide with those in which the tracking error increases, as expected.

\begin{figure}[t]
	\centering
	\includegraphics[width=1.08\columnwidth]{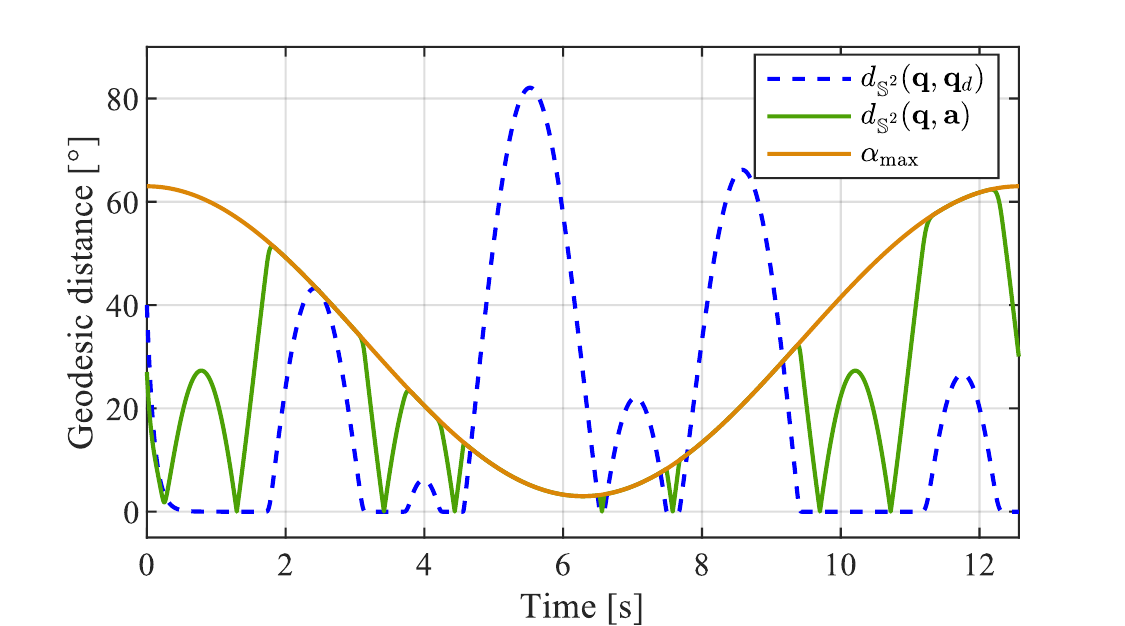} 
	\caption{Time evolution of {\mathversion{normal}$d_{\mathbb{S}^2}(\b{q},\b{q}_d)$},  {\mathversion{normal}$d_{\mathbb{S}^2}(\b{q},\b{a})$}, and {\mathversion{normal}$\alpha_\mathrm{max}$ for $\delta=0.2$}.}
	\label{fig:alpha}
\end{figure}

\section{CONCLUSIONS}

This letter proposed a geometric solution to reduced-attitude tracking on the 2-sphere under a time-varying conic constraint. An unconstrained tracking law was first developed with almost-global asymptotic and regionally exponential stability guarantees. A smooth reference-shaping mechanism was then introduced to generate an admissible reference while preserving the differentiability required by the controller. The resulting closed-form scheme provides continuous feedback, avoids online optimization, and is therefore attractive for real-time implementation on computationally limited platforms. Although the method enforces a relaxed version of the original constraint rather than hard forward invariance of the admissible set, it offers a useful tradeoff in applications where smooth control action and rigorous tracking guarantees are prioritized. Future work will address extension to torque-controlled dynamics and the incorporation of stronger safety guarantees.

\balance
\bibliographystyle{IEEEtran}

\bibliography{refs}

@inproceedings{Sawant25,
	author={Sawant, Mayur and Tayebi, Abdelhamid},
	booktitle={64th IEEE Conf. Dec. Control}, 
	title={Stabilization on the 2-Sphere under Geodesically Convex Constraints}, 
	year={2025},
	volume={},
	number={},
	pages={7401-7406},
	doi={10.1109/CDC57313.2025.11312701}
}

@article{Ibuki2020,
	author={Ibuki, Tatsuya and Wilson, Sean and Ames, Aaron D. and Egerstedt, Magnus},
	journal={IEEE Control Syst. Lett.}, 
	title={Distributed Collision-Free Motion Coordination on a Sphere: A Conic Control Barrier Function Approach}, 
	year={2020},
	volume={4},
	number={4},
	pages={976-981},
	doi={10.1109/LCSYS.2020.2997952}
}

@article{Auntenrieb25,
	author = {Autenrieb, Johannes},
	title = {Quadratic Programming Approach to Flight Envelope Protection Using Control Barrier Functions},
	journal = {J. Guid. Control Dyn.},
	volume = {48},
	number = {11},
	pages = {2622-2633},
	year = {2025},
	doi = {10.2514/1.G009203}
}

@inproceedings{Wu15,
	author={Wu, Guofan and Sreenath, Koushil},
	booktitle={Amer. Control Conf.}, 
	title={Safety-critical and constrained geometric control synthesis using control {Lyapunov} and control Barrier functions for systems evolving on manifolds}, 
	year={2015},
	volume={},
	number={},
	pages={2038-2044},
	doi={10.1109/ACC.2015.7171033}
}

@article{Bullo95,
	title = {Control on the Sphere and Reduced Attitude Stabilization},
	journal = {IFAC Proc. Vol.},
	volume = {28},
	number = {14},
	pages = {495-501},
	year = {1995},
	issn = {1474-6670},
	doi = {https://doi.org/10.1016/S1474-6670(17)46878-9},
	author = {F. Bullo and R.M. Murray and A. Sarti}
}

@article{Dimos21,
	title = {Constrained stabilization on the {$n$}-sphere},
	journal = {Automatica},
	volume = {125},
	year = {2021},
	issn = {0005-1098},
	doi = {https://doi.org/10.1016/j.automatica.2020.109416},
	author = {Soulaimane Berkane and Dimos V. Dimarogonas}
}

@article{Lee17,
	title = {Constrained geometric attitude control on {SO(3)}},
	journal = {Int. J. Control Autom. Syst.},
	volume = {15},
	year = {2017},
	doi = {https://doi.org/10.1007/s12555-016-0607-4},
	author = {Shankar Kulumani and Taeyoung Lee}
}

@article{Romdlony16,
	title = {Stabilization with guaranteed safety using Control {Lyapunov}–Barrier Function},
	journal = {Automatica},
	volume = {66},
	pages = {39-47},
	year = {2016},
	issn = {0005-1098},
	doi = {https://doi.org/10.1016/j.automatica.2015.12.011},
	author = {Muhammad Zakiyullah Romdlony and Bayu Jayawardhana}
}

@ARTICLE{Mesbahi14,
	author={Lee, Unsik and Mesbahi, Mehran},
	journal={IEEE Trans. Aerosp. Electron. Syst.}, 
	title={Feedback control for spacecraft reorientation under attitude constraints via convex potentials}, 
	year={2014},
	volume={50},
	number={4},
	pages={2578-2592},
	doi={10.1109/TAES.2014.120240}
}

@INPROCEEDINGS{Tan20,
	author={Tan, Xiao and Dimarogonas, Dimos V.},
	booktitle={59th IEEE Conf. Dec. Control}, 
	title={Construction of control barrier function and {$\mathcal{C}^2$} reference trajectory for constrained attitude maneuvers}, 
	year={2020},
	pages={3329-3334},
	doi={10.1109/CDC42340.2020.9304324}
}

@article{Fiori24,
	title = {Modeling, simulation and control of a spacecraft: Automated reorientation under directional constraints},
	journal = {Acta Astronaut.},
	volume = {216},
	pages = {214-228},
	year = {2024},
	doi = {https://doi.org/10.1016/j.actaastro.2023.12.053},
	author = {Simone Fiori and Luca Sabatini and Francesco Rachiglia and Edoardo Sampaolesi}
}

@article{Liu23,
	author={Liu, Yueyang and Hu, Qinglei and Feng, Gang},
	journal={IEEE Trans. Aerosp. Electron. Syst.}, 
	title={Adaptive Reduced Attitude Control for Rigid Spacecraft With Elliptical Pointing Constraints}, 
	year={2023},
	volume={59},
	number={4},
	pages={3835-3847},
	doi={10.1109/TAES.2022.3232319}
}

@ARTICLE{Nicotra20,
	author={Nicotra, Marco M. and Liao-McPherson, Dominic and Burlion, Laurent and Kolmanovsky, Ilya V.},
	journal={IEEE Trans. Autom. Control}, 
	title={Spacecraft Attitude Control With Nonconvex Constraints: An Explicit Reference Governor Approach}, 
	year={2020},
	volume={65},
	number={8},
	pages={3677-3684},
	doi={10.1109/TAC.2019.2951303}
}

@article{Shen18,
	title = {Rigid-body attitude stabilization with attitude and angular rate constraints},
	journal = {Automatica},
	volume = {90},
	pages = {157-163},
	year = {2018},
	issn = {0005-1098},
	doi = {https://doi.org/10.1016/j.automatica.2017.12.029},
	author = {Qiang Shen and Chengfei Yue and Cher Hiang Goh and Baolin Wu and Danwei Wang}
}

@article{Ramos18,
	author = {Diaz Ramos, Manuel and Schaub, Hanspeter},
	title = {Kinematic Steering Law for Conically Constrained Torque-Limited Spacecraft Attitude Control},
	journal = {J. Guid. Control Dyn.},
	volume = {41},
	number = {9},
	pages = {1990-2001},
	year = {2018},
	doi = {10.2514/1.G002873}
}

@article{Szmuk20,
	author = {Szmuk, Michael and Reynolds, Taylor P. and A\c{c}\i{}kme\c{s}e, Beh\c{c}et},
	title = {Successive Convexification for Real-Time Six-Degree-of-Freedom Powered Descent Guidance with State-Triggered Constraints},
	journal = {J. Guid. Control Dyn.},
	volume = {43},
	number = {8},
	pages = {1399-1413},
	year = {2020},
	doi = {10.2514/1.G004549}
}

@article{Casau19,
	title = {Robust global exponential stabilization on the n-dimensional sphere with applications to trajectory tracking for quadrotors},
	journal = {Automatica},
	volume = {110},
	year = {2019},
	issn = {0005-1098},
	doi = {https://doi.org/10.1016/j.automatica.2019.108534},
	author = {Pedro Casau and Christopher G. Mayhew and Ricardo G. Sanfelice and Carlos Silvestre}
}

@INPROCEEDINGS{Nakano2018,
	author={Nakano, Satoshi and Nguyen, Tam W. and Garone, Emanuele and Ibuki, Tatsuya and Sampei, Mitsuji},
	booktitle={57th IEEE Conf. Dec. Control}, 
	title={Attitude Constrained Control on {SO(3)}: An Explicit Reference Governor Approach}, 
	year={2018},
	volume={},
	number={},
	pages={1833-1838},
	doi={10.1109/CDC.2018.8618908}
}

@INPROCEEDINGS{Leomanni2024,
	author={Leomanni, Mirko and Dionigi, Alberto and Ferrante, Francesco and Valigi, Paolo and Costante, Gabriele},
	booktitle={63rd IEEE Conf. Dec. Control}, 
	title={Almost Global Trajectory Tracking for Quadrotors Using Thrust Direction Control on {$\mathbb{S}^2$}}, 
	year={2024},
	volume={},
	number={},
	pages={7516-7521},
	doi={10.1109/CDC56724.2024.10886381}
}

@ARTICLE{Serra2016,
		author={Serra, Pedro and Cunha, Rita and Hamel, Tarek and Cabecinhas, David and Silvestre, Carlos},
		journal={IEEE Trans. Robot.}, 
		title={Landing of a Quadrotor on a Moving Target Using Dynamic Image-Based Visual Servo Control}, 
		year={2016},
		volume={32},
		number={6},
		pages={1524-1535},
		doi={10.1109/TRO.2016.2604495}
}

@INPROCEEDINGS{Guiggiani2015,
	author={Guiggiani, Alberto and Kolmanovsky, Ilya and Patrinos, Panagiotis and Bemporad, Alberto},
	booktitle={Proc Amer. Control Conf.}, 
	title={Fixed-point constrained Model Predictive Control of spacecraft attitude}, 
	year={2015},
	volume={},
	number={},
	pages={2317-2322},
	doi={10.1109/ACC.2015.7171078}
}

@ARTICLE{Li2025,
	author={Li, Dongyu and Tong, Shang and Yang, Haoyang and Hu, Qinglei},
	journal={IEEE/ASME Trans. Mechatronics}, 
	title={Time-Synchronized Control for Spacecraft Reorientation With Time-Varying Constraints}, 
	year={2025},
	volume={30},
	number={3},
	pages={2073-2083},
	doi={10.1109/TMECH.2024.3430953}
}

@ARTICLE{Hu2019,
	author={Hu, Qinglei and Chi, Biru and Akella, Maruthi R.},
	journal={IEEE/ASME Trans. Mechatronics}, 
	title={Reduced Attitude Control for Boresight Alignment With Dynamic Pointing Constraints}, 
	year={2019},
	volume={24},
	number={6},
	pages={2942-2952},
	doi={10.1109/TMECH.2019.2944431}
}

@article{He2022,
	title = {A pointing-based method for spacecraft attitude maneuver path planning under time-varying pointing constraints},
	journal = {Adv. Space Res.},
	volume = {70},
	number = {4},
	pages = {1062-1077},
	year = {2022},
	issn = {0273-1177},
	doi = {https://doi.org/10.1016/j.asr.2022.05.058},
	author = {Hanqing He and Peng Shi and Yushan Zhao}
}

@inproceedings{Kim2004,
	author = {Yoonsoo Kim and Mehran Mesbahi and Gurkirpal Singh and Fred Hadaegh},
	title = {On the Constrained Attitude Control Problem},
	booktitle = {AIAA Guid. Navig. Control Conf. Ex.},
	year = {2004},
	chapter = {},
	pages = {},
	doi = {10.2514/6.2004-5129},
}

@article{Tsiotras1994,
	title = {Spin-axis stabilization of symmetric spacecraft with two control torques},
	journal = {Syst. {\&} Control Lett.},
	volume = {23},
	number = {6},
	pages = {395-402},
	year = {1994},
	issn = {0167-6911},
	doi = {https://doi.org/10.1016/0167-6911(94)90093-0},
	author = {Panagiotis Tsiotras and James M. Longuski}
}

@article{Pong2015,
	author = {Pong, Christopher M. and Miller, David W.},
	title = {Reduced-Attitude Boresight Guidance and Control on Spacecraft for Pointing, Tracking, and Searching},
	journal = {J. Guid. Control Dyn.},
	volume = {38},
	number = {6},
	pages = {1027-1035},
	year = {2015},
	doi = {10.2514/1.G000264}
}

@Article{Coates2021,
	AUTHOR = {Coates, Erlend M. and Fossen, Thor I.},
	TITLE = {Geometric Reduced-Attitude Control of Fixed-Wing {UAVs}},
	JOURNAL = {Appl. Sci.},
	VOLUME = {11},
	YEAR = {2021},
	NUMBER = {7},
	ARTICLE-NUMBER = {3147},
	ISSN = {2076-3417},
	DOI = {10.3390/app11073147}
}

@INPROCEEDINGS{Lee2016,
	author={Lee, Taeyoung},
	booktitle={55th IEEE Conf. Dec. Control}, 
	title={Optimal hybrid controls for global exponential tracking on the two-sphere}, 
	year={2016},
	volume={},
	number={},
	pages={3331-3337},
	doi={10.1109/CDC.2016.7798770}}

@book{Ebert2002,
	title={Texturing and Modeling: A Procedural Approach},
	author={Ebert, D.S. and Musgrave, F.K. and Peachey, D. and Perlin, K. and Worley, S.},
	isbn={9780080518756},
	year={2002},
	publisher={Elsevier Science}
}

\end{document}